\def\bu{\bullet}
\def\marker{\>\hbox{${\vcenter{\vbox{
    \hrule height 0.4pt\hbox{\vrule width 0.4pt height 6pt
    \kern6pt\vrule width 0.4pt}\hrule height 0.4pt}}}$}\>}
\def\gpic#1{#1
%     \midinsert\centerline{\box\graph}\endinsert }
     \smallskip\par\noindent{\centerline{\box\graph}} \medskip}
\def\bu{\bullet}
\def\marker{\>\hbox{${\vcenter{\vbox{
    \hrule height 0.4pt\hbox{\vrule width 0.4pt height 6pt
    \kern6pt\vrule width 0.4pt}\hrule height 0.4pt}}}$}\>}
\def\gpic#1{#1
%     \midinsert\centerline{\box\graph}\endinsert }
     \smallskip\par\noindent{\centerline{\box\graph}} \medskip}
\begin{document}

%  FORMAT MACROS    ++++++++++++++++++++++++++++++++++++++++++++++++++++++

\input epsf.tex
\def\sp{\bigskip}
\def\ti{\\ \hglue \the \parindent}
\def\ce#1{\LP\medskip\centerline{#1}\medskip}
\def\LP{\par\noindent}

%  ENVIRONMENTS   +++++++++++++++++++++++++++++++++++++++++++++++++
\newtheorem{theorem}{Theorem}[section]
\newtheorem{lemma}[theorem]{Lemma}
\newtheorem{corollary}[theorem]{Corollary}
\newtheorem{prop}[theorem]{Proposition}
\newtheorem{conj}[theorem]{Conjecture}
\newtheorem{claim}[theorem]{Claim}
\newtheorem{alg}[theorem]{Algorithm}
\theoremstyle{definition}
\newtheorem{remark}[theorem]{Remark}
\newtheorem{example}[theorem]{Example}
\newtheorem{definition}[theorem]{Definition}
\def\PF{\LP{\it Proof.}}
\def\qed{\ifhmode\unskip\nobreak\hfill$\Box$\bigskip\fi \ifmmode\eqno{Box}\fi}

\def\al{\alpha} \def\be{\beta}  \def\ga{\gamma} \def\dlt{\delta}
\def\eps{\epsilon} \def\th{\theta}  \def\ka{\kappa} \def\lmb{\lambda}
\def\sg{\sigma} \def\om{\omega}
\def\nul{\varnothing} %{\emptyset}
\def\st{\colon\,}   %"such that"
\def\MAP#1#2#3{#1\colon\,#2\to#3}
\def\VEC#1#2#3{#1_{#2},\ldots,#1_{#3}}
\def\VECOP#1#2#3#4{#1_{#2}#4\cdots #4 #1_{#3}}
\def\SE#1#2#3{\sum_{#1=#2}^{#3}}  \def\SGE#1#2{\sum_{#1\ge#2}}
\def\PE#1#2#3{\prod_{#1=#2}^{#3}} \def\PGE#1#2{\prod_{#1\ge#2}}
\def\UE#1#2#3{\bigcup_{#1=#2}^{#3}}
\def\CH#1#2{\binom{#1}{#2}} \def\MULT#1#2#3{\binom{#1}{#2,\ldots,#3}}
\def\FR#1#2{\frac{#1}{#2}}
\def\FL#1{\left\lfloor{#1}\right\rfloor} \def\FFR#1#2{\FL{\frac{#1}{#2}}}
\def\CL#1{\left\lceil{#1}\right\rceil}   \def\CFR#1#2{\CL{\frac{#1}{#2}}}
\def\Gb{\overline{G}}
\def\un#1{\underline{#1}}
\def\odd{$T$-odd}

% MODES ++++++++++++++++++++++++++++++++++++++++++++++++++++++++++++++++
\def\B#1{{\bf #1}}      \def\R#1{{\rm #1}}
\def\I#1{{\it #1}}      \def\c#1{{\cal #1}}
\def\C#1{\left | #1 \right |}    %cardinality
\def\CC#1{\left \Vert #1 \right \Vert}    %cardinality of edge set
\def\P#1{\left ( #1 \right )}    %grouping
\def\ov#1{\overline{#1}}        \def\un#1{\underline{#1}}

% SPECIALS +++++++++++++++++++++++++++++++++++++++++++++++++++++++++++
\def\aG{\al(G)} \def\aH{\al (H)} \def\tG{\th(G)} \def\tH{\th(H)}
\def\oG{\om(G)} \def\oH{\om(H)} \def\xG{\chi(G)} \def\xH{\chi (H)}
\def\Knnt{K_{\FL{n/2},\CL{n/2}}}
\def\Cnnt{\CH n{\FL{n/2}}}
\def\NN{{\Bbb N}} \def\ZZ{{\Bbb Z}} \def\QQ{{\Bbb Q}} \def\RR{{\Bbb R}}

%%%%%%%%%%%%%%%%%%%%%%%%%%%%%%%%%%%%%%%%%%%%%%%%%%%%%%%%%%%%%%%%

\title{Cut-edges and regular factors\\ in regular graphs of odd degree}

\def\cutbd{\FL{\FR{2r+1}k}}
\def\esub{\subseteq}
\author{ 
Alexander V. Kostochka\thanks{University of Illinois at Urbana--Champaign,
Urbana IL 61801, and Sobolev Institute of Mathematics, Novosibirsk 630090,
Russia: \texttt{kostochk@math.uiuc.edu}.  Research supported in part by NSF
grants DMS-1600592 and grants 18-01-00353A and 16-01-00499  of the Russian
Foundation for Basic Research.}\,,
Andr\'e Raspaud\thanks{Universit\'{e} de Bordeaux, LaBRI UMR 5800,
F-33400 Talence, France: \texttt{raspaud@labri.fr}.}\,,
Bjarne Toft\thanks{University of Southern Denmark,
Odense, Denmark: \texttt{btoft@imada.sdu.dk}.}\,,\\
Douglas B. West\thanks{Zhejiang Normal University, Jinhua, China 321004
and University of Illinois at Urbana--Champaign, Urbana IL 61801:
\texttt{dwest@math.uiuc.edu}.
Research supported by Recruitment Program of Foreign Experts, 1000 Talent Plan,
State Administration of Foreign Experts Affairs, China.}\,,
Dara Zirlin\thanks{University of Illinois at Urbana--Champaign, Urbana IL 61801:
\texttt{zirlin2@illinois.edu}.}
}
\date{\today}
\maketitle

\begin{abstract}
We study $2k$-factors in $(2r+1)$-regular graphs.  Hanson, Loten, and Toft
proved that every $(2r+1)$-regular graph with at most $2r$ cut-edges has a
$2$-factor.  We generalize their result by proving for $k\le(2r+1)/3$ that
every $(2r+1)$-regular graph with at most $2r-3(k-1)$ cut-edges has a
$2k$-factor.  Both the restriction on $k$ and the restriction on the number of
cut-edges are sharp.  We characterize the graphs that have exactly
$2r-3(k-1)+1$ cut-edges but no $2k$-factor.  For $k>(2r+1)/3$, there are
graphs without cut-edges that have no $2k$-factor, as studied by 
Bollob\'as, Saito, and Wormald.
%In addition, for
%$r\ge(3k-1)/2$, we prove that a $(2r+1)$-regular graph with at most
%${\FR{2r+1}k}-1$ cut-edges has a $2k$-factor.
\end{abstract}

\section{Introduction}
An {\it $\ell$-factor} in a graph is an $\ell$-regular spanning subgraph.
In this paper we study the relationship between cut-edges and $2k$-factors in
regular graphs of odd degree.  In fact, all our results are for multigraphs,
allowing loops and multiedges, so the model we mean by ``graph'' allows
loops and multiedges.

The relationship between edge-connectivity and $1$-factors in regular graphs is
well known.  Petersen~\cite{Pete} proved that every $3$-regular graph with no
cut-edge decomposes into a $1$-factor and a $2$-factor, noting that the
conclusion also holds when all cut-edges lie along a path.
Sch\"onberger~\cite{Sch} proved that in a $3$-regular
graph with no cut-edge, every edge lies in some $1$-factor.
%Baebler~\cite{Bab} extended Petersen's result to $r$-regular
%$(r-1)$-edge-connected graphs of even order.
Berge~\cite{Ber} obtained the same conclusion for $r$-regular
$(r-1)$-edge-connected graphs of even order.
Finally, a result of
Plesn\'\i{k}~\cite{Ple} implies most of these statements: If $G$ is an
$r$-regular $(r-1)$-edge-connected multigraph with even order, and $G'$ is
obtained from $G$ by discarding at most $r-1$ edges, then $G'$ has a
$1$-factor.  The edge-connectivity condition is sharp: Katerinis~\cite{Kat}
determined the minimum number of vertices in an $r$-regular
$(r-2)$-edge-connected graph of even order having no $1$-factor.
Belck~\cite{Bel} and Bollob\'as, Saito, and Wormald~\cite{BSW} (independently)
determined all $(r,t,k)$ such that every $r$-regular $t$-edge-connected graph
has a $k$-factor; Niessen and Randerath~\cite{NR} further refined this in terms
of also the number of vertices.

Petersen was in fact more interested in $2$-factors.  The result about
$3$-regular graphs whose cut-edges lie on a path implies that every $3$-regular
graph with at most two cut-edges has a $2$-factor.  Also, there are $3$-regular
graphs with three cut-edges having no $2$-factor (communicated to Petersen by
Sylvester in 1889).  As a tool in a result about interval edge-coloring,
Hanson, Loten, and Toft~\cite{HLT} generalized Petersen's result to regular
graphs with larger odd degree.

\begin{theorem}[{\rm\cite{HLT}}]\label{HLT}
For $r\in\NN$, every $(2r+1)$-regular graph with at most $2r$ cut-edges
has a $2$-factor.
\end{theorem}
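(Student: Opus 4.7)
The plan is to apply Tutte's $f$-factor theorem with $f\equiv 2$. Applied to a $(2r+1)$-regular graph $G$, it says $G$ has a $2$-factor if and only if for every pair of disjoint sets $S,T\subseteq V(G)$,
$$2|S|+(2r-1)|T|-e(S,T)-q(S,T)\;\ge\;0,$$
where $q(S,T)$ counts components $C$ of $G-(S\cup T)$ with $e(C,T)$ odd. A short parity check (using that $\sum_C e(C,T)+e(S,T)\equiv|T|\pmod 2$) shows the left side is always even, so if $G$ has no $2$-factor then there exist disjoint $S,T$ with
$$q(S,T)\;\ge\;2|S|+(2r-1)|T|-e(S,T)+2.\qquad(*)$$
Assume such a $G$ exists and choose a counterexample $(S,T)$ minimizing $|S|+|T|$; note $T\ne\emptyset$, for otherwise $q=0$ contradicts $(*)$.

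The key structural observation is that any odd component $C$ of $G-(S\cup T)$ with $e(C,S\cup T)=1$ must be attached to the rest of $G$ by a cut-edge. Writing $q_1$ for the number of such \emph{pendant} odd components, the cut-edge hypothesis gives $q_1\le 2r$, so it suffices to show that $(*)$ forces $q_1\ge 2r+1$.

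In the case $S=\emptyset$, every odd $C$ satisfies $e(C,T)\in\{1,3,5,\dots\}$, whence $q_1+3(q-q_1)\le\sum_{C\text{ odd}} e(C,T)\le(2r+1)|T|-2e(T)$. Combined with $(*)$, this simplifies to $q_1\ge(2r-2)|T|+3+e(T)\ge 2r+1$, giving the desired contradiction; hence $S\ne\emptyset$ in any minimum counterexample. When $S\ne\emptyset$, a non-pendant odd component can have $e(C,T)=1$ and $e(C,S)\ge 1$, so the refinement $e(C,T)\ge 3$ breaks down. The plan here is to exploit the minimality of $|S|+|T|$: for each $u\in S$ the pair $(S\setminus\{u\},T)$ fails $(*)$, which—after accounting for how removing $u$ from $S$ merges $u$ with its outside-neighbors' components into one new component—forces $q(S,T)-q(S\setminus\{u\},T)\ge 3-e(u,T)$. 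This in turn forces each $u\in S$ to have many odd components among its neighbors in $V(G)\setminus(S\cup T)$; summing these contributions and plugging them into the global edge identity $(2r+1)(|S|+|T|)=2e(S)+2e(T)+2e(S,T)+\sum_C e(C,S\cup T)$ again drives $q_1$ above $2r$.

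The main obstacle is the case $S\ne\emptyset$: one must translate the minimality of $(S,T)$ into sharp quantitative constraints on individual vertices of $S$ and reconcile them with the parity structure of $G-(S\cup T)$, so that the extra flexibility granted by $S$-incident edges cannot permit $q$ to grow without producing an additional pendant for each surplus unit.
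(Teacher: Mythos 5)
Your setup is correct and matches the framework the paper uses to prove its generalization (Theorem~\ref{2kfac} with $k=1$, which contains Theorem~\ref{HLT}): the Belck--Tutte condition, the parity upgrade to $(*)$, and the observation that an odd component sending a single edge to $S\cup T$ is attached by a cut-edge, so $q_1\le 2r$. Your $S=\emptyset$ case is also correct. But the case $S\ne\emptyset$ is a genuine gap: you describe a plan (minimize $|S|+|T|$, extract a per-vertex constraint for each $u\in S$ from the non-failure of $(S\setminus\{u\},T)$, and sum) and you yourself flag it as ``the main obstacle,'' but you do not carry it out. The difficulty is real. Moving $u$ from $S$ into the remainder merges all components of $G-S-T$ adjacent to $u$ into a single new component, so the minimality bound only tells you that $u$ is adjacent to at least about $4-e(u,T)$ odd components; summing over $S$ yields an inequality relating $e(S,R)$ and $e(S,T)$ that does not obviously improve on the trivial degree bound $e(u,R)+e(u,T)\le 2r+1$ for $u\in S$. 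As written, the theorem is established only when $S=\emptyset$.

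The missing idea --- which removes the need for any minimality argument --- is to split the odd components having exactly one edge to $T$ according to whether they also send an edge to $S$. Those with no edge to $S$ are your pendants, so there are at most $p\le 2r$ of them; let $q_2$ count those with at least one edge to $S$, and note $q_2\le e(S,R)$ where $R=V(G)-S-T$. With $q_3$ counting odd components having at least three edges to $T$, one has $q_1+q_2+3q_3\le d_{G-S}(T)$, hence
\begin{equation*}
3q \;=\; 2(q_1+q_2)+(q_1+q_2+3q_3)\;\le\; 2p+2e(S,R)+d_{G-S}(T).
\end{equation*}
Combining this with $(*)$ in the form $q\ge d_{G-S}(T)+2(|S|-|T|)+2$ gives $p+e(S,R)\ge d_{G-S}(T)+3(|S|-|T|)+3$, and the degree count at $S$, namely $(2r+1)|S|\ge e(S,T)+e(S,R)$, gives $d_{G-S}(T)=(2r+1)|T|-e(S,T)\ge(2r+1)(|T|-|S|)+e(S,R)$. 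The $e(S,R)$ terms cancel and, using $|T|>|S|$, you get $p\ge(2r-2)(|T|-|S|)+3\ge 2r+1$, a contradiction, uniformly in whether $S$ is empty. I recommend reworking the second half of your argument along these lines.
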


Petersen~\cite{Pete} also proved that every regular graph of even degree has a
$2$-factor.  Thus when $k\le r$ every $2r$-regular graph has a $2k$-factor.  
As a consequence, regular factors of degree $2k$ become harder to guarantee as
$k$ increases.  That is, a decomposition of a $(2r+1)$-regular graph into a
$2$-factor and $(2r-1)$-factor is easiest to find, while decomposition into a
$2r$-factor and $1$-factor is hardest to find (and implies the others).
 
In this paper, we generalize Theorem~\ref{HLT} to find the corresponding
best possible guarantee for $2k$-factors.  Limiting the number of cut-edges
suffices when $k$ is not too large.

\begin{theorem}\label{main}
For $r,k\in\NN$ with $k\le (2r+1)/3$, every $(2r+1)$-regular graph with at most
$2r-3(k-1)$ cut-edges has a $2k$-factor.  Furthermore, both inequalities are
sharp.
\end{theorem}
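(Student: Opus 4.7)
The plan is to apply Tutte's $f$-factor theorem with $f\equiv 2k$. Assume for contradiction that $G$ is a $(2r+1)$-regular multigraph with at most $2r-3(k-1)$ cut-edges but no $2k$-factor. Tutte then furnishes disjoint $S,T\subseteq V(G)$ with
\[
q(S,T)\;>\;2k|S|+(2r+1-2k)|T|-e_G(S,T),
\]
where $q(S,T)$ counts components $C$ of $G-S-T$ having $e_G(V(C),T)$ odd (since $\sum_{v\in V(C)} f(v)=2k|V(C)|$ is even, the parity condition for an ``odd component'' collapses to that of $e_G(V(C),T)$). The deficiency $\delta(S,T)=2k|S|+(2r+1-2k)|T|-e_G(S,T)-q(S,T)$ is congruent to $\sum_v f(v)=2k|V(G)|\equiv 0\pmod 2$, which sharpens the violation to $q(S,T)\ge 2k|S|+(2r+1-2k)|T|-e_G(S,T)+2$. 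I would also note that no $2k$-factor $F$ contains a cut-edge: on either side $X$ of a cut-edge $e$, the sum $\sum_{v\in X} d_F(v)=2k|X|$ is even and equals twice the number of factor-edges inside $X$ plus $[e\in F]$, forcing $[e\in F]=0$.

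Next I would pick a violating pair $(S,T)$ minimizing $|S|+|T|$ and split the odd components of $G-S-T$ by their edge count to $T$: let $q_1$ and $q_3$ count those with exactly one and with at least three such edges, so $q=q_1+q_3$. The degree identity at $T$ gives $q_1+3q_3\le\sum_C e_G(V(C),T)\le(2r+1)|T|-e_G(S,T)$. Taking three times the sharpened violation $q_1+q_3\ge 2k|S|+(2r+1-2k)|T|-e_G(S,T)+2$ and subtracting the degree inequality produces the master estimate
\[
q_1\;\ge\;3k|S|+(2r+1-3k)|T|-e_G(S,T)+3.
\]
When $S=\emptyset$, the unique edge to $T$ from each $q_1$-component is a cut-edge of $G$ (its removal separates $V(C)$), and together with $k\le(2r+1)/3$ (so $2r+1-3k\ge 0$) and $|T|\ge 1$ (since $q>0$ forces some $e_G(V(C),T)$ odd), we obtain at least $(2r+1-3k)+3=2r-3(k-1)+1$ cut-edges, contradicting the hypothesis.

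The hard part will be forcing $S=\emptyset$ via the minimality of $(S,T)$: I would move a vertex $s\in S$ out of $S$ (either into $T$ or into a neighboring component) and track the shift in the Tutte deficiency through $e_G(s,T)$, $e_G(s,S\setminus\{s\})$, and the parities of $e_G(s,V(C))$ across the components that $s$ meets, using $k\le(2r+1)/3$ to ensure the shift violates minimality. The residual case in which some $q_1$-components have $e_G(V(C),S)>0$ and so fail to contribute direct cut-edges is absorbed by the slack $3k|S|$ built into the master estimate. For sharpness of the cut-edge bound, I would exhibit a $(2r+1)$-regular multigraph with exactly $2r-3(k-1)+1$ cut-edges and no $2k$-factor: take a central vertex $v$ joined by a single cut-edge to each of $2r-3(k-1)+1$ pendant vertices (each with $r$ loops), and by three parallel edges to each of $k-1$ triple vertices (each with $r-1$ loops); the parity observation forces every cut-edge out of any $2k$-factor, and at each triple vertex $u_j$ the equation $e_j+2\ell_j=2k$ with $e_j\in\{0,1,2,3\}$ forces $e_j\in\{0,2\}$, so $v$ can amass only $\sum_j e_j\le 2(k-1)<2k$ factor-degree. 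Sharpness of $k\le(2r+1)/3$ then follows from the Bollob\'as--Saito--Wormald bridgeless constructions referenced in the introduction.
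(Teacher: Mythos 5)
Your setup matches the paper's proof almost exactly: the same Belck/Tutte criterion with $\ell=2k$, the same parity sharpening to get the ``$+2$'', the same classification of \odd\ components by the number of edges to $T$, and your ``master estimate'' $q_1\ge 3k|S|+(2r+1-3k)|T|-e_G(S,T)+3$ is a correct consolidation of the paper's chain of inequalities (three times the deficiency minus the degree sum at $T$). The case $S=\nul$ is then handled correctly, and your sharpness example (one hub, $2r+4-3k$ pendant blobs attached by cut-edges, $k-1$ blobs attached by triple edges, parity forcing $d_F(v)\le 2(k-1)$) is exactly the paper's base construction with $|T|=1$, $|S|=0$.

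The genuine gap is the case $S\ne\nul$, which you designate as ``the hard part'' and propose to eliminate by choosing $(S,T)$ minimizing $|S|+|T|$ and moving a vertex $s$ out of $S$. That mechanism does not work: if you move $s$ from $S$ into $R$, the deficiency $q(S,T)-d_{G-S}(T)-2k(|S|-|T|)$ changes by at least $-a-e(s,T)+2k+\eps$ where $a$ is the number of \odd\ components meeting $s$, and since $a+e(s,T)$ can be as large as $2r+1$, the change can be as negative as $2k-(2r+1)<0$; moving $s$ into $T$ is worse. So minimality of $|S|+|T|$ does not force $S=\nul$, and you have not shown that a violating pair with $S=\nul$ exists. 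The correct repair is precisely your parenthetical ``slack'' remark, but it must be carried out and it needs an ingredient you never establish, namely $|T|>|S|$ (which follows because each \odd\ component contributes at least $1$ to $d_{G-S}(T)$, so $q(S,T)\le d_{G-S}(T)$ and the sharpened violation forces $2k(|S|-|T|)+2\le 0$). Concretely: the number of single-edge-to-$T$ components that also send an edge to $S$ is at most $\CC{R,S}\le(2r+1)|S|-e_G(S,T)$, so the number of genuine cut-edges is at least
$q_1-\CC{R,S}\ge(2r+1-3k)(|T|-|S|)+3\ge 2r+4-3k$,
using $|T|-|S|\ge1$ and $k\le(2r+1)/3$. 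Note that the absorption is not by ``the slack $3k|S|$'' alone: it consumes the entire $(2r+1)|S|$ and survives only because of the strict inequality $|T|>|S|$. This is exactly the role of the paper's separate count $q_2\le\CC{R,S}$ together with the degree sum at $S$; with that computation written out, your argument closes and coincides with the paper's.
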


Earlier, Xiao and Liu~\cite{XL} proved a relationship between cut-edges
and $2k$-factors, showing that a $(2kr+s)$-regular graph with at most
$k(2r-3)+s$ cut-edges has a $2k$-factor avoiding any given edge.  Their
number of cut-edges in terms of degree and $k$ is similar to ours, since
$(2kr+s)-1-3(k-1)=k(2r-3)+s+2$, but their range of validity of $k$ in terms of
the degree of the full graph is more restricted than ours.

Our result is sharp in two ways.  First, when $k\le (2r+1)/3$ and there are
$2r+1-3(k-1)$ cut-edges, there may be no $2k$-factor.  Sylvester found examples
of such graphs (forbidding $2$-factors in a regular graph of odd degree 
greater than $1$ forbids all regular factors).  We complete the
Petersen--Sylvester investigation by describing all the extremal graphs without
$2k$-factors for general $k$.

\begin{theorem}
For $r,k\in\NN$ with $k\le (2r+1)/3$, a $(2r+1)$-regular graph with exactly
$2r+1-3(k-1)$ cut-edges fails to have a $2k$-factor if and only if it satisfies
the constructive structural description stated in Theorem~\ref{charzn}.
\end{theorem}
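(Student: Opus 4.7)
The plan is to deduce the characterization by following the proof of Theorem~\ref{main} and extracting exactly which configurations make the inequalities tight. The workhorse is Tutte's $f$-factor theorem applied with $f\equiv 2k$: a $(2r+1)$-regular graph $G$ has a $2k$-factor if and only if for every disjoint pair $(S,T)$ of vertex subsets, the Tutte deficiency
\[
\delta_G(S,T)\;=\;2k|S|+(2r+1-2k)|T|-e_G(S,T)-q_G(S,T)
\]
is nonnegative, where $q_G(S,T)$ counts components $C$ of $G-S-T$ with $e_G(V(C),T)$ odd (since $2k|V(C)|$ is even, ``odd'' here refers to the boundary to $T$). Because $2k|V(G)|$ is even, $\delta_G(S,T)$ is always even, so non-existence of a $2k$-factor means $\delta_G(S,T)\le -2$ for some pair $(S,T)$.

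First, I would assume $G$ is $(2r+1)$-regular with exactly $2r-3k+4$ cut-edges and no $2k$-factor, and fix a Tutte-bad pair $(S,T)$ whose deficiency is as negative as possible, with $|S|+|T|$ minimized subject to that. Using $(2r+1)$-regularity, I would decompose $\delta_G(S,T)$ into contributions from $S$, from $T$, and from the components of $G-S-T$, and then account for the cut-edges of $G$ as the edges lying in the boundary $e_G(S\cup T,\,V(G)\setminus(S\cup T))$ plus cut-edges internal to $G[S\cup T]$ and to the components of $G-S-T$.

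Second, I would bound the number of cut-edges by analyzing, for each component $C$ contributing to $q_G(S,T)$, how many cut-edges it can carry, and how many boundary edges it must contribute in order to be ``odd.'' The proof of Theorem~\ref{main} yields $c(G)\le 2r-3(k-1)$; here $c(G)=2r-3(k-1)+1$, so every inequality in that chain must be tight to within one unit. Parity of $\delta_G$ forces exactly one unit of slack in a controlled location, which pins down the structure of $S$, $T$, and the odd components: any vertex of $S$ would contribute $2k\ge 2$ units of otherwise-uncompensated slack, forcing $S=\emptyset$; $T$ must induce a very restricted subgraph; and each odd component must attach to $T$ through a single cut-edge and have a prescribed interior structure.

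Third, with these rigidities established, I would identify the atomic building blocks — nearly $(2r+1)$-regular multigraphs with a designated attachment vertex of deficiency $1$ — and show, by induction on the number of cut-edges of $G$, that $G$ is obtained by iteratively gluing such atoms along cut-edges in a tree pattern with parity constraints dictated by the Tutte certificate. This recursive procedure is precisely the constructive description of Theorem~\ref{charzn}; the converse direction, that every graph so constructed lacks a $2k$-factor, then follows from a direct computation of $\delta_G$ on the pair $(S,T)$ built during the construction. The main obstacle will be the classification of admissible atoms and admissible gluings: ensuring the family is exactly right — neither missing extremal graphs nor admitting graphs that actually have $2k$-factors — requires matching every degree of freedom in the construction to a tight inequality in the Tutte certificate, and this matching is where the bulk of the case analysis lives.
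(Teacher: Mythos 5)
Your overall strategy---apply the Belck--Tutte criterion with $\ell=2k$, take a violating pair $(S,T)$, and force equality in every step of the counting argument that proves Theorem~\ref{2kfac}---is exactly the route the paper takes for necessity, and your converse (evaluate the deficiency directly on the certificate pair built into the construction) matches the paper's sufficiency argument. The problem is that the structural conclusions you announce in your second step are wrong, and they would lead you to a family that misses most of the extremal graphs. First, a small point: since the graph has exactly $2r+4-3k$ cut-edges and the chain of inequalities gives $p\ge 2r+4-3k$, every inequality in the chain must hold with \emph{exact} equality; there is no ``one unit of slack'' to localize, and no appeal to parity is needed at this stage.

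The serious error is the claim that $S=\emptyset$ is forced. A vertex of $S$ contributes $2k$ to one side of the deficiency but also removes $2r+1$ from $d_{G-S}(T)$, and these effects balance exactly when every edge at $S$ leads to $T$, either directly or through a component of $G-S-T$ sending exactly one edge to $S$ and one to $T$ (a ``blister''). The extremal family genuinely contains graphs with $S\ne\emptyset$---Figure~\ref{k1fig} is already such an example---so forcing $S=\emptyset$ discards valid extremal graphs. Likewise, the odd components are not all attached to $T$ by a single cut-edge: equality in $q_1+q_2+3q_3=d_{G-S}(T)$ forces three distinct types, namely $p$ components joined to $T$ by one cut-edge, some number of blisters joined by one edge each to $S$ and to $T$, and exactly $k(|T|-|S|)-1$ components joined to $T$ by exactly three edges (none of which need be cut-edges). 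Your ``atoms glued along cut-edges in a tree pattern'' cannot produce the three-edge components or the blisters, so even if the tightness analysis were carried out correctly, the proposed inductive assembly would not reproduce the characterization of Theorem~\ref{charzn}. You also omit the conclusion $|T|-|S|=1$ when $k<(2r+1)/3$ (versus $|T|-|S|\ge1$ when $k=(2r+1)/3$), which is where the hypothesis $k\le(2r+1)/3$ enters the equality analysis.
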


When $k>(2r+1)/3$, the condition in Theorem~\ref{main} cannot be satisfied,
and in fact there are $(2r+1)$-regular graphs that have no $2k$-factor even
though they have no cut-edges.  A $2k$-factor can instead be guaranteed by
edge-connectivity requirements.  The result of Berge~\cite{Ber} implies that
$(2r+1)$-regular $2r$-edge-connected graphs have $1$-factors and hence
factors of all even degrees, by the $2$-factor theorem of Petersen~\cite{Pete}.
Therefore, when $k>(2r+1)/3$ the natural question becomes what
edge-connectivity suffices to guarantee a $2k$-factor.

As mentioned earlier, this problem was solved by Bollob\'as, Saito, and
Wormald~\cite{BSW}, who determined all triples $(r,t,k)$ such that every
$r$-regular $t$-edge-connected multigraph has a $k$-factor (the triples are the
same for simple graphs).  As noted by H\"aggkvist~\cite{Hag} and by
Niessen and Randerath~\cite{NR}, earlier Belck~\cite{Bel} obtained the
result (in 1950).  Earlier still, Baebler~\cite{Bab} proved the weaker result
that $2k$-edge-connected $(2r+1)$-regular graphs have $2k$-factors.
% earlier Belck~\cite{Bel} had
%solved the case where $k$ is odd (for graphs of even order).

The special case of the result of~\cite{BSW} that applies here (even-regular
factors of odd-regular multigraphs) is that all ${(2r+1)}$-regular
$2t$-edge-connected or $(2t+1)$-edge-connected multigraphs have $2k$-factors if
and only if ${k\le \frac{t}{2t+1}}(2r+1)$.  The general construction given
in~\cite{BSW}, which covers additional cases, is quite complicated.  Here we
provide a very simple construction that completes our investigation and shows
necessity of their condition for even-regular factors of odd-regular graphs.
In particular, for $1\le t<r$ and $k>\frac{t}{2t+1}(2r+1)$ we present an
easily described $(2t+1)$-connected simple graph that has no $2k$-factor.

Our results use the necessary and sufficient condition for the existence of
$\ell$-factors that was initially proved by Belck~\cite{Bel} and is a special
case of the $f$-Factor Theorem of Tutte~\cite{Tut1,Tut2}.  When $T$ is a set of
vertices in a graph $G$, let $d_G(T)=\sum_{v\in T} d_G(v)$, where $d_G(v)$ is
the degree of $v$ in $G$.  With $|T|$ for the size of a vertex set $T$, we also
write $\CC T$ for the number of edges induced by $T$ and $\CC{A,B}$ for the
number of edges having endpoints in both $A$ and $B$ (when $A\cap B=\nul$).
The characterization is the following.

\begin{theorem}[\cite{Bel,Tut1,Tut2}]\label{lfact}
A multigraph $G$ has a $\ell$-factor if and only if 
\begin{equation}\label{lfac}
q(S,T)-d_{G-S}(T)\le \ell(\C S-\C T)
\end{equation}
for all disjoint subsets $S,T\subset V(G)$, where $q(S,T)$ is the number of
components $Q$ of $G-S-T$ such that $\CC{V(Q),T}+\ell\C{V(Q)}$ is odd.
\end{theorem}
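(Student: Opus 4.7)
The plan is to derive the theorem from Tutte's $1$-factor theorem via the classical gadget reduction. For the necessity direction, fix an $\ell$-factor $F$ of $G$ and disjoint $S, T \subseteq V(G)$. For each component $Q$ of $G-S-T$, the degree sum $\sum_{v\in V(Q)} d_F(v) = \ell\C{V(Q)}$ gives
\[
\ell\C{V(Q)} \equiv e_F(V(Q),S) + e_F(V(Q),T) \pmod{2},
\]
where $e_F(A,B)$ denotes the number of $F$-edges between disjoint sets $A,B$. Thus for each $Q$ with $\CC{V(Q),T} + \ell\C{V(Q)}$ odd, either $F$ uses an edge from $Q$ to $S$, or $e_F(V(Q),T)$ differs in parity from $\CC{V(Q),T}$, so at least one $G$-edge from $Q$ to $T$ is absent from $F$. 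Summing these deficits over all $q(S,T)$ such components, together with the bound $d_F(S)=\ell\C S$ on the number of $F$-edge-ends at $S$, and rearranging yields \eqref{lfac}.

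For sufficiency, we construct an auxiliary multigraph $H$ and apply Tutte's $1$-factor theorem. We may assume $d_G(v)\ge\ell$ for every $v$, as otherwise $(S,T)=(\nul,\{v\})$ already violates \eqref{lfac}. For each $v\in V(G)$, introduce disjoint vertex sets $A(v)$ of size $d_G(v)$, with one $A$-vertex per edge-end at $v$ (a loop contributes two), and $B(v)$ of size $d_G(v)-\ell$. Insert the complete bipartite graph between $A(v)$ and $B(v)$. For each edge $e=uv\in E(G)$, add an edge in $H$ joining the two $A$-vertices that represent the ends of $e$. A direct check shows that $\ell$-factors of $G$ correspond bijectively to perfect matchings of $H$: in any perfect matching of $H$, the $d_G(v)-\ell$ vertices of $B(v)$ must be saturated within $A(v)$, so exactly $\ell$ vertices of $A(v)$ are matched along $A$--$A$ edges, and these form the $\ell$-factor at $v$.

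Now apply Tutte's theorem to $H$. If $H$ has no perfect matching, there exists $X\subseteq V(H)$ with $o(H-X)>\C X$; choose such an $X$ minimizing $\C X$. An exchange argument shows this $X$ may be taken in \emph{canonical form}: for every $v$ one has $X\cap(A(v)\cup B(v))\in\{\nul,\ B(v),\ A(v)\cup B(v)\}$. Define $S=\{v:A(v)\cup B(v)\subseteq X\}$ and $T=\{v:B(v)\subseteq X,\ A(v)\cap X=\nul\}$. A bookkeeping of $\C X$ on the gadget, and of the components of $H-X$ — each is either a single $A(v)$-vertex for $v\in T$ (collectively accounting for the term $d_{G-S}(T)$) or the $A$-lift of a component $Q$ of $G-S-T$, whose size parity matches that of $\CC{V(Q),T}+\ell\C{V(Q)}$ — converts $o(H-X)>\C X$ into $q(S,T)-d_{G-S}(T)>\ell(\C S-\C T)$, a violation of \eqref{lfac}.

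The principal obstacle is the canonicalization step. Eliminating ``mixed'' patterns where $X$ meets both $A(v)$ and $B(v)$ in proper subsets requires a careful exchange argument: swapping $A$- and $B$-vertices of the same $v$ into or out of $X$ and tracking the change in $o(H-X)-\C X$, with case analysis on how the remaining $A(v)$-vertices are distributed among odd and even components of $H-X$. The demand that no such swap strictly decreases $o(H-X)-\C X$ is precisely what forces the parity condition $\CC{V(Q),T}+\ell\C{V(Q)}$ odd in the definition of $q(S,T)$.
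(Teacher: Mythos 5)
The paper does not prove Theorem~\ref{lfact} at all: it is quoted as a known result of Belck and Tutte, so there is no internal proof to compare against, and your submission must be judged on its own. The strategy you choose (reduce to Tutte's $1$-factor theorem via the vertex gadget $A(v),B(v)$) is the classical and correct one, and your necessity argument is essentially sound: the parity identity $\ell|V(Q)|\equiv e_F(V(Q),S)+e_F(V(Q),T)\pmod 2$ does force each component counted by $q(S,T)$ to contribute either an $F$-edge into $S$ or a non-$F$-edge into $T$. One small caveat: to convert the sum of these deficits into \eqref{lfac} you need the count $d_F(T)=\ell|T|$ as well as $d_F(S)=\ell|S|$ (the former bounds the total number of edges from $T$ to $R$ missing from $F$); citing only the bound at $S$ does not close the computation.

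The genuine gap is in the sufficiency direction, where the two steps that constitute essentially all of the work are asserted rather than carried out. First, the canonicalization of the minimum Tutte set $X$ --- eliminating the mixed patterns $X\cap(A(v)\cup B(v))\notin\{\nul,\,B(v),\,A(v)\cup B(v)\}$ --- is exactly the exchange argument you yourself flag as ``the principal obstacle,'' and it is not done; without it there is no $(S,T)$ to extract. Second, even granting canonical form, the component census of $H-X$ is not accurate as described: for $v\in T$ a surviving $A(v)$-vertex is an isolated odd component only when its partner edge-end lies in $S$; edge-ends between two $T$-vertices pair into $K_2$ components (even, hence uncounted), and edge-ends into $R$ attach to the lift of the corresponding component $Q$, which is how the parity of $\CC{V(Q),T}+\ell|V(Q)|$ arises. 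Moreover, when $d_G(v)=\ell$ for some $v\in R$, the set $B(v)$ is empty and the lift of a component $Q$ of $G-S-T$ need not be connected (an $A(v)$-vertex whose partner lies in $S$ becomes an isolated vertex of $H-X$), so the intended bijection between components of $G-S-T$ and components of $H-X$ breaks and the inequality $o(H-X)>|X|$ no longer translates directly into a violation of \eqref{lfac}. Until the exchange argument and a correct component count are written out, the sufficiency half remains a plan rather than a proof; alternatively, it can simply be quoted from Tutte's $f$-factor theorem, which is what the paper does.
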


Since we consider only the situation where $\ell=2k$, the criterion for
a component $Q$ of $G-S-T$ to be counted by $q(S,T)$ simplifies to
$\CC{V(Q),T}$ being odd.

\section{Cut-edges and $2k$-factors}

In this section we generalize Theorem~\ref{HLT} to $2k$-factors. 
\begin{theorem}\label{2kfac}
For $r,k\in\NN$ with $k\le(2r+1)/3$, every $(2r+1)$-regular multigraph with at
most $2r-3(k-1)$ cut-edges has a $2k$-factor.
\end{theorem}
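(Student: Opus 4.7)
The approach is to apply the Tutte--Belck criterion (Theorem~\ref{lfact}) by contradiction. Suppose $G$ is a $(2r+1)$-regular multigraph with at most $2r-3(k-1)$ cut-edges that has no $2k$-factor. Then there exist disjoint $S,T\subseteq V(G)$ with $q(S,T)-d_{G-S}(T)>2k(|S|-|T|)$. Writing $d_{G-S}(T)=(2r+1)|T|-\CC{S,T}$ turns this into $q(S,T)+\CC{S,T}>2k|S|+(2r+1-2k)|T|$, and a parity check (both sides are $\equiv |T|\pmod{2}$, since $q(S,T)$ has the parity of $\sum_Q \CC{V(Q),T}=(2r+1)|T|-2\CC{T}-\CC{S,T}$) sharpens the strict inequality to a gap of at least $2$:
\begin{equation}\label{plan:viol}
q(S,T)+\CC{S,T}\;\ge\;2k|S|+(2r+1-2k)|T|+2.
\end{equation}

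For each component $Q$ of $G-S-T$, write $t_Q=\CC{V(Q),T}$ and $s_Q=\CC{V(Q),S}$; call $Q$ \emph{odd} if $t_Q$ is odd, so $q(S,T)$ counts odd components. A crude use of $t_Q\ge 1$ on odd $Q$ combined with~(\ref{plan:viol}) already forces $k(|T|-|S|)\ge\CC{T}+1$, in particular $|T|\ge|S|+1$. For a finer estimate, I would split the odd components into three classes according to boundary size: $q_1$ of them have $t_Q+s_Q=1$, $q_A$ have $t_Q=1$ and $s_Q\ge 1$, and $q_B$ have $t_Q\ge 3$. Each component counted by $q_1$ contributes a distinct cut-edge of $G$ (its sole boundary edge, whose removal separates $V(Q)$ from the rest). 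The two easy estimates $q_1+q_A+3q_B\le\sum_Q t_Q=(2r+1)|T|-2\CC{T}-\CC{S,T}$ and $q_A\le\sum_Q s_Q=(2r+1)|S|-2\CC{S}-\CC{S,T}$ follow from $(2r+1)$-regularity.

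The plan is to use these two estimates together with~(\ref{plan:viol}) to eliminate $q_B$ and $q_A$ and obtain a lower bound on $q_1$. A short linear manipulation (multiplying~(\ref{plan:viol}) by $3$ and subtracting the upper bound on $q_1+q_A+3q_B$, then inserting the upper bound on $q_A$) produces
$$q_1\;\ge\;\bigl((2r+1)-3k\bigr)(|T|-|S|)+\CC{T}+2\CC{S}+3.$$
Since $k\le(2r+1)/3$ makes the leading coefficient nonnegative and $|T|-|S|\ge 1$, this gives $q_1\ge 2r-3(k-1)+1$, supplying $2r-3(k-1)+1$ distinct cut-edges and contradicting the hypothesis. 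The one delicate point is the parity strengthening to a gap of $2$ in~(\ref{plan:viol}): without it, the analogous calculation only reaches $q_1\ge 2r-3(k-1)$, which matches the hypothesis rather than exceeding it, so the argument is tight in precisely the place where regularity is being used. Everything else is routine boundary-edge accounting.
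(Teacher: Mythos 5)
Your proof is correct and follows essentially the same route as the paper's: the Tutte--Belck criterion sharpened by parity to a gap of $2$, the same three-way classification of the odd components, and the same linear combination (tripling the main inequality and subtracting the two boundary-edge counts) to force at least $2r-3(k-1)+1$ distinct cut-edges. The only cosmetic difference is that you carry the $\CC{T}$ and $\CC{S}$ terms explicitly before discarding them, whereas the paper works with $d_{G-S}(T)$ and $\CC{R,S}$ directly.
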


\begin{proof}
Let $G$ be a $(2r+1)$-regular multigraph having no $2k$-factor, and let $p$ be
the number of cut-edges in $G$.  We prove $p>2r-3(k-1)$.  By setting
$\ell=2k$ in Theorem~\ref{lfact}, lack of a $2k$-factor requires disjoint sets
$S,T\subseteq V(G)$ such that $q(S,T)> 2k(\C S-\C T)+d_{G-S}(T)$.

Letting $R=V(G)-S-T$,  the quantity $q(S,T)$ becomes the number
of components $Q$ of $G[R]$ such that $\CC{V(Q),T}$ is odd.  Thus $q(S,T)$ has
the same parity as $\CC{R,T}$.  In turn, $\CC{R,T}$ has the same parity as
$d_{G-S}(T)$, since the latter counts edges from $R$ to $T$ once and edges
within $T$ twice.  Hence the two sides of the inequality above have the same
parity.  We conclude
\begin{equation}\label{e1}
q(S,T)\geq d_{G-S}(T)+2k(|S|-|T|)+2.
\end{equation} 
 
Say that a subgraph $H$ of $G-T$ is {\it \odd} if $\CC{V(H),T}$ is odd.  The
components of $G-S-T$ that are \odd\ are the components counted by $q(S,T)$.
Each \odd\ component contributes at least $1$ to $d_{G-S}(T)$.  Hence
(\ref{e1}) cannot hold with $|S|\geq |T|$, and we may assume $|T|>|S|$.

Let $q_1$ be the number of \odd\ components having one edge to $T$ and no edges
to $S$; since that edge is a cut-edge, $q_1\le p$.
%\begin{equation}\label{e4}
%q_1\leq x.
%\end{equation} 
Let $q_2$ be the number of \odd\ components having one edge to $T$ and at least
one edge to $S$; note that $q_2\le \CC{R,S}$.  
Let $q_3$ be the number of \odd\ components having at least three edges to $T$;
thus $q_1+q_2+3q_3\le d_{G-S}(T)$.  Note also that $q(S,T)=q_1+q_2+q_3$.
%Then
%\begin{equation}\label{e2}
% q_1+3q_2+q_3\leq \sum_{v\in T} d_{G-S}(v),
%\end{equation} 
%and
%\begin{equation}\label{e3}
% q_3\leq e(R,S).
%\end{equation} 
%Summing (\ref{e2}) with (\ref{e3}) and (\ref{e4}) multiplied by $2$, we get
Summing the last inequality with two copies of the first two yields
$$3q(S,T)=3(q_1+q_2+q_3)\leq 2p+2\CC{R,S}+d_{G-S}(T).$$
Combining this inequality with (\ref{e1}) yields
$$
{2p+2\CC{R,S}+ d_{G-S}(T)}\geq 3 d_{G-S}(T)+6k(|S|-|T|)+6,
$$ 
which simplifies to
\begin{equation}\label{e5}
\CC{R,S}\geq 3-p+ d_{G-S}(T)+3k(|S|-|T|).
\end{equation}

On the other hand, since $G$ is $(2r+1)$-regular,
$$d_{G-S}(T)=(2r+1)|T|-\CC{T,S}\geq (2r+1)|T|-\left[(2r+1)|S|-\CC{R,S}\right].
$$
Using this inequality, (\ref{e5}), and $|T|-|S|\geq 1$, the given hypothesis
$2r+1-3k \ge 0$ yields
$$\CC{R,S}\geq 3-p+(2r+1-3k)(|T|-|S|)+\CC{R,S}\geq 3-p+(2r+1-3k)+\CC{R,S}.$$
This simplifies to $p\geq 2r+1-3(k-1)$, as claimed.
\end{proof}

\section{Fewest cut-edges with no $2k$-factor}

To describe the extremal graphs, we begin with a definition.
Keep in mind that here ``graph'' allows loops and multiedges.

\begin{definition}
In a $(2r+1)$-regular graph $G$, the result of {\it blistering} an edge
$e\in E(G)$ by a $(2r+1)$-regular graph $H$ having no cut-edge is a graph $G'$
obtained from the disjoint union $G+H$ by deleting $e$ and an edge $e'\in E(H)$
(where $e'$ may be a loop if $r>1$), followed by adding two disjoint edges to
make each endpoint of $e$ adjacent to one endpoint of $e'$.  The resulting
graph $G'$ is $(2r+1)$-regular.
\end{definition}

Figure~\ref{k1fig} illustrates blistering of one edge joining $S$ and $T$
in a $3$-regular graph $G$ with three cut-edges and no $2$-factor to obtain a
larger such graph $G'$.  The components of $G'-S-T$ labeled $Q_i$ are
components counted by $q_i$, for $i\in\{1,2,3\}$.

\begin{figure}[h]
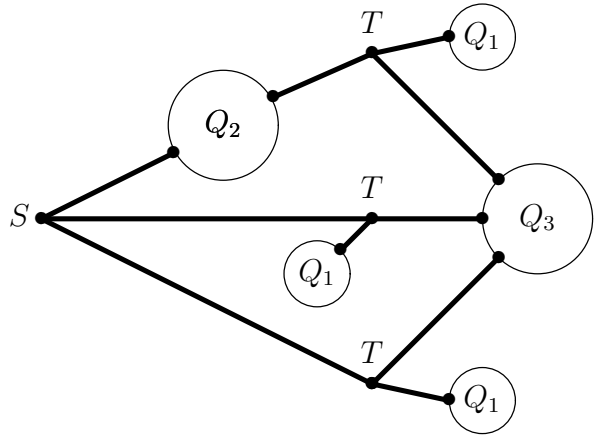

\gpic{
\expandafter\ifx\csname graph\endcsname\relax \csname newbox\endcsname\graph\fi
\expandafter\ifx\csname graphtemp\endcsname\relax \csname newdimen\endcsname\graphtemp\fi
\setbox\graph=\vtop{\vskip 0pt\hbox{%
    \special{pn 8}%
    \special{ar 2423 2077 173 173 0 6.28319}%
    \special{ar 1558 1413 173 173 0 6.28319}%
    \special{ar 2423 173 173 173 0 6.28319}%
    \special{ar 2712 1125 288 288 0 6.28319}%
    \special{ar 1067 635 288 288 0 6.28319}%
    \graphtemp=.5ex\advance\graphtemp by 1.125in
    \rlap{\kern 0.115in\lower\graphtemp\hbox to 0pt{\hss $\bu$\hss}}%
    \graphtemp=.5ex\advance\graphtemp by 1.990in
    \rlap{\kern 1.846in\lower\graphtemp\hbox to 0pt{\hss $\bu$\hss}}%
    \graphtemp=.5ex\advance\graphtemp by 0.781in
    \rlap{\kern 0.806in\lower\graphtemp\hbox to 0pt{\hss $\bu$\hss}}%
    \graphtemp=.5ex\advance\graphtemp by 0.488in
    \rlap{\kern 1.329in\lower\graphtemp\hbox to 0pt{\hss $\bu$\hss}}%
    \graphtemp=.5ex\advance\graphtemp by 1.990in
    \rlap{\kern 1.846in\lower\graphtemp\hbox to 0pt{\hss $\bu$\hss}}%
    \graphtemp=.5ex\advance\graphtemp by 2.077in
    \rlap{\kern 2.250in\lower\graphtemp\hbox to 0pt{\hss $\bu$\hss}}%
    \graphtemp=.5ex\advance\graphtemp by 1.125in
    \rlap{\kern 1.846in\lower\graphtemp\hbox to 0pt{\hss $\bu$\hss}}%
    \graphtemp=.5ex\advance\graphtemp by 0.173in
    \rlap{\kern 2.250in\lower\graphtemp\hbox to 0pt{\hss $\bu$\hss}}%
    \graphtemp=.5ex\advance\graphtemp by 0.260in
    \rlap{\kern 1.846in\lower\graphtemp\hbox to 0pt{\hss $\bu$\hss}}%
    \special{pn 28}%
    \special{pa 115 1125}%
    \special{pa 806 781}%
    \special{fp}%
    \special{pa 115 1125}%
    \special{pa 1846 1990}%
    \special{fp}%
    \special{pa 115 1125}%
    \special{pa 1846 1125}%
    \special{fp}%
    \special{pa 1846 1990}%
    \special{pa 1846 1990}%
    \special{fp}%
    \special{pa 1329 488}%
    \special{pa 1846 260}%
    \special{fp}%
    \special{pa 1846 1990}%
    \special{pa 2250 2077}%
    \special{fp}%
    \special{pa 2250 173}%
    \special{pa 1846 260}%
    \special{fp}%
    \special{pa 1846 1125}%
    \special{pa 1680 1291}%
    \special{fp}%
    \special{pa 2508 1329}%
    \special{pa 1846 1990}%
    \special{fp}%
    \special{pa 2423 1125}%
    \special{pa 1846 1125}%
    \special{fp}%
    \special{pa 2508 921}%
    \special{pa 1846 260}%
    \special{fp}%
    \graphtemp=.5ex\advance\graphtemp by 1.125in
    \rlap{\kern 0.000in\lower\graphtemp\hbox to 0pt{\hss $S$\hss}}%
    \graphtemp=.5ex\advance\graphtemp by 0.635in
    \rlap{\kern 1.067in\lower\graphtemp\hbox to 0pt{\hss $Q_2$\hss}}%
    \graphtemp=.5ex\advance\graphtemp by 0.635in
    \rlap{\kern 1.067in\lower\graphtemp\hbox to 0pt{\hss $Q_2$\hss}}%
    \graphtemp=.5ex\advance\graphtemp by 1.846in
    \rlap{\kern 1.846in\lower\graphtemp\hbox to 0pt{\hss $T$\hss}}%
    \graphtemp=.5ex\advance\graphtemp by 0.981in
    \rlap{\kern 1.846in\lower\graphtemp\hbox to 0pt{\hss $T$\hss}}%
    \graphtemp=.5ex\advance\graphtemp by 0.115in
    \rlap{\kern 1.846in\lower\graphtemp\hbox to 0pt{\hss $T$\hss}}%
    \graphtemp=.5ex\advance\graphtemp by 2.077in
    \rlap{\kern 2.423in\lower\graphtemp\hbox to 0pt{\hss $Q_1$\hss}}%
    \graphtemp=.5ex\advance\graphtemp by 1.413in
    \rlap{\kern 1.558in\lower\graphtemp\hbox to 0pt{\hss $Q_1$\hss}}%
    \graphtemp=.5ex\advance\graphtemp by 0.173in
    \rlap{\kern 2.423in\lower\graphtemp\hbox to 0pt{\hss $Q_1$\hss}}%
    \graphtemp=.5ex\advance\graphtemp by 1.125in
    \rlap{\kern 2.712in\lower\graphtemp\hbox to 0pt{\hss $Q_3$\hss}}%
    \graphtemp=.5ex\advance\graphtemp by 1.291in
    \rlap{\kern 1.680in\lower\graphtemp\hbox to 0pt{\hss $\bu$\hss}}%
    \graphtemp=.5ex\advance\graphtemp by 1.329in
    \rlap{\kern 2.508in\lower\graphtemp\hbox to 0pt{\hss $\bu$\hss}}%
    \graphtemp=.5ex\advance\graphtemp by 1.125in
    \rlap{\kern 2.423in\lower\graphtemp\hbox to 0pt{\hss $\bu$\hss}}%
    \graphtemp=.5ex\advance\graphtemp by 0.921in
    \rlap{\kern 2.508in\lower\graphtemp\hbox to 0pt{\hss $\bu$\hss}}%
    \hbox{\vrule depth2.250in width0pt height 0pt}%
    \kern 3.000in
  }%
}%
}
\vspace{-1pc}
\caption{A class of $3$-regular graphs with three cut-edges and no
$2$-factor.\label{k1fig}}
\end{figure}

\begin{theorem}\label{charzn}
For $k\le(2r+1)/3$, a $(2r+1)$-regular graph with $2r+4-3k$ cut-edges has no
$2k$-factor if and only if the vertex set $V(G)$ has a partition into sets
$R,S,T$ such that \\
(a) $S$ and $T$ are independent sets with $|T|>|S|$,\\
(b) all cut-edges join $T$ to distinct components of $G[R]$,\\
(c) all edges incident to $S$ lead to $T$ (possibly via blisters that are
components of $G[R]$),\\
(d) exactly $k(|T|-|S|)-1$ components of $G[R]$ are joined to $T$ by exactly
three edges each,\\
(e) each remaining component of $R$ is $(2r+1)$-regular, with no cut-edge,
and \\
(f) if $k<(2r+1)/3$, then $|T|-|S|=1$.
\looseness-1
\end{theorem}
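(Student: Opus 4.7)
The statement is a biconditional, and my plan is to drive both directions from the chain of inequalities in the proof of Theorem~\ref{2kfac}: the sufficiency direction by applying those estimates as equalities to verify that (\ref{lfac}) fails, and the necessity direction by exploiting the fact that $p = 2r+4-3k$ is exactly the lower bound forced there, so every inequality in the chain must be tight.

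For the \emph{sufficiency} direction, I would assume (a)--(f) and check both the cut-edge count and the violation of (\ref{lfac}). Since $S$ and $T$ are independent, summing degrees in $S$ and in $T$ gives $(2r+1)|S| = \CC{S,T} + b$ and $(2r+1)|T| = \CC{S,T} + p + b + 3(k(|T|-|S|)-1)$, where $b$ is the number of blister components and $p$ is the number of cut-edges. Subtracting and applying (f) when $k<(2r+1)/3$ (and observing $2r+1-3k = 0$ otherwise) yields $p = 2r+4-3k$. The odd components of $G[R]$ are precisely the $p$ cut-edge components, the $b$ blisters, and the $k(|T|-|S|)-1$ three-edge components (the components from (e) have no $T$-edges and so are not odd), so $q(S,T) = p + b + k(|T|-|S|) - 1$ and $d_{G-S}(T) = p + b + 3(k(|T|-|S|) - 1)$. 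A short calculation shows $q(S,T) = d_{G-S}(T) + 2k(|S|-|T|) + 2$, so (\ref{lfac}) fails and $G$ has no $2k$-factor.

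For the \emph{necessity} direction, I would let $S,T$ be disjoint sets provided by Theorem~\ref{lfact} and revisit the proof of Theorem~\ref{2kfac}; since $p = 2r+4-3k$ matches the lower bound derived there, every inequality in that proof must be an equality. The last step then yields $|T|-|S| = 1$ when $k < (2r+1)/3$, giving (f). Equality in the estimate $d_{G-S}(T) \ge (2r+1)(|T|-|S|) + \CC{R,S}$ requires $S$ to be independent. Equality in $q_1 + q_2 + 3q_3 \le d_{G-S}(T)$ forces $T$ to be independent, each $q_3$-component to send exactly three edges to $T$, and every other component of $G[R]$ to send no edges to $T$. The equalities $q_1 = p$ and $q_2 = \CC{R,S}$ identify the cut-edges with the unique $T$-edges of the $q_1$-components (yielding (b)) and place every $R$-to-$S$ edge on a $q_2$-component with exactly one $S$-edge. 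Substituting into the tight form of (\ref{e1}) gives $q_3 = k(|T|-|S|) - 1$, which is (d); the remaining components of $G[R]$ have no edges to $S \cup T$, and since all cut-edges of $G$ are $q_1$-type, they are cut-edge-free and $(2r+1)$-regular, giving (e).

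The \emph{main obstacle} is to show that each $q_2$-component $Q$ is a blister as defined, i.e., that reinserting a virtual edge $e'$ between the two attachment vertices of $Q$ produces a $(2r+1)$-regular graph with no cut-edge. Regularity is immediate from degrees. For cut-edge-freeness, any cut-edge of $Q + e'$ would also be a cut-edge $e$ of $Q$ whose removal leaves both attachment vertices in the same component of $Q - e$; but then deleting $e$ from $G$ disconnects the opposite component of $Q - e$ from the rest of $G$, contradicting the equality $q_1 = p$. The degenerate loop case in which the two attachment vertices coincide (possible when $r > 1$) is handled identically. This verifies (c) and completes the characterization.
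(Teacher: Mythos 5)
Your proposal is correct and follows essentially the same route as the paper: sufficiency by computing $q(S,T)-d_{G-S}(T)$ directly from (a)--(f) to show (\ref{e1}) holds, and necessity by observing that $p=2r+4-3k$ forces equality in every inequality in the proof of Theorem~\ref{2kfac} and reading off (a)--(f) from the resulting identities $q_1=p$, $q_2=\CC{R,S}$, $q_1+q_2+3q_3=d_{G-S}(T)$, etc. Your explicit check that each $q_2$-component plus its virtual edge is cut-edge-free (because an internal cut-edge not separating the two attachment vertices would be a cut-edge of $G$ contradicting $q_1=p$) makes precise a step the paper passes over quickly, and is a worthwhile addition.
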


\begin{proof}
{\it Sufficiency:}  Let $G$ be a graph $G$ with $2r+4-3k$ cut-edges, and
suppose that such a partition $\{R,S,T\}$ of $V(G)$ exists.  Let $q_2$ be the
number of components of $G[R]$ that blister edges from $S$ to $T$.  Each
cut-edge joins $T$ to a \odd\ component, by (b).  The $k(\C T-\C S)-1$
components of $G[R]$ joined to $T$ by three edges (according to (d)) are also
\odd, as are the $q_2$ components of $G[R]$ arising as blisters.  Hence
$q(S,T)\ge2r+4-3k+k(\C T-\C S)-1+q_2$.  The number of edges joining $S$ and $T$
is $(2r+1)\C S-q_2$, by (c).  Using also (a), we have
$d_{G-S}(T)=(2r+1)(\C T-\C S)+q_2$.  We compute
\begin{align*}
q(S,T)-d_{G-S}(T)&\ge(2r+1-3k)+2+(k-2r-1)(\C T-\C S)\\
&=-(2r+1-3k)(\C T-\C S-1)+2k(\C S-\C T)+2= 2k(\C S-\C T)+2,
\end{align*}
where the last equality uses (f) and the restriction $k\le(2r+1)/3$.  Hence
the given partition $R,S,T$ satisfies (\ref{e1}), and $G$ has no $2k$-factor.

{\it Necessity:}
Suppose that $G$ has $2r+1-3(k-1)$ cut-edges and no $2k$-factor; we obtain the
described partition of $V(G)$. The proof of Theorem~\ref{2kfac} considers
$(2r+1)$-regular graphs with no $2k$-factor and produces $p\ge 2r+4-3k$, where
$p$ is the number of cut-edges.  To avoid having more cut-edges, we must have
equality in all the inequalities used to produce this lower bound.

Recall that $q(S,T)$ counts the components $Q$ of $G[R]$ with $\CC{V(Q),T}$
odd.  Also $q(S,T)=q_1+q_2+q_3$, where $q_1,q_2,q_3$ count the components
having one edge to $T$ and none to $S$, one edge to $T$ and at least one to
$S$, and at least three edges to $T$, respectively.  Equality in the
computation of Theorem~\ref{2kfac} requires all of the following.
\begin{equation}\label{e6}
q_1=p
\end{equation}
\begin{equation}\label{e8}
q_2=\CC{R,S}
\end{equation}
\begin{equation}\label{e7}
q_1+q_2+3q_3=d_{G-S}(T)
\end{equation}
\begin{equation}\label{e9}
(2r+1)|S|=\CC{T,S}+\CC{R,S}
\end{equation}
\begin{equation}\label{e10}
|T|-|S|\ge1, \textrm{with equality when $k<(2r+1)/3$}
\end{equation}

By (\ref{e7}), contributions to $d_G(T)$ not in $\CC{T,S}$ are counted in
$\CC{T,R}$, so $T$ is independent.  By (\ref{e9}), all edges incident to $S$
lead to $T$ or $R$, so $S$ is independent, proving (a).  The first observation
in proving Theorem~\ref{2kfac} was $|T|>|S|$, and equality in the last step
requires $|T|-|S|=1$ when $2r+1>3k$, as stated in (\ref{e10}) and desired in
(f).  By $(\ref{e6})$, the cut-edges join $T$ to distinct components of $G[R]$,
proving (b).

By (\ref{e8}) and (\ref{e9}), $q_2=0$ implies $(2r+1)|S|=\CC{T,S}$,
making all edges incident to $S$ incident also to $T$.
Since $(2r+1)|S|=\CC{T,S}+q_2$, each component of $G[R]$ counted by $q_2$
generates only one edge from $R$ to $S$.  Thus each such component blisters an
edge joining $S$ and $T$ in a smaller such graph.  This explains all the
edges counted by $\CC{S,R}$.  Hence we view the edges incident to $S$ as
edges to $T$ with possible blisters, proving (c).
 
We have accounted for $(2r+1)|S|$ edges incident to $T$ leading to $S$,
including through $q_2$ blisters.  There are also $p$ cut-edges leading to
components of $G[R]$, where $p=2r+1-3(k-1)$.  This leaves
$(2r+1)|T|-(2r+1)+3(k-1)-(2r+1)|S|$ edges incident to $T$ that are not
cut-edges and join $T$ to components of $G[R]$ not counted by $q_2$.

When $k<(2r+1)/3$ and $|T|-|S|=1$, this expression simplifies to $3(k-1)$.
When $k=(2r+1)/3$, it simplifies to $3[k(|T|-|S|)-1]$, which is valid for
both cases.  By (\ref{e7}), all remaining edges incident to $T$ connect
vertices of $T$ to \odd\ components of $G[R]$ counted by $q_3$, using exactly
three edges for each such component.  Hence there are exactly $k(|T|-|S|)-1$
such components of $G[R]$, proving (d).  This completes the description of the
\odd\ components.

Since we have described all edges incident to $S$ and $T$, any remaining
components of $G[R]$ are actually $(2r+1)$-regular components of $G$ without
cut-edges, proving (e).  They do not affect the number of \odd\ components or
the existence of a $2k$-factor.
\end{proof}

Theorem~\ref{charzn} can be viewed as a constructive procedure for generating
all extremal examples from certain base graphs.  Given $r$ and $k$ with
$k\le (2r+1)/3$, we start with a bipartite graph having parts $T$ and
$R\cup S$, where $\C T-\C S\ge1$, with equality if $k<(2r+1)/3$.  Also,
vertices in $T\cup S$ have degree $2r+1$, and $R$ has $2r+4-3k$ vertices of
degree $1$ and $k(\C T-\C S)-1$ vertices of degree $3$.  We expand the vertices
of $R$ to obtain a $(2r+1)$-regular multigraph $G$.  This is a base graph.
We can then blister edges from $S$ to $T$ and/or add $(2r+1)$-regular
$2$-edge-connected components.

The case $|T|=1$ and $|S|=0$ gives the graphs found by Sylvester.
When $k>(2r+1)/3$, an inequality used in the proof of Theorem~\ref{2kfac} is
not valid.  In this range no restriction on cut-edges can guarantee a
$2k$-factor; we present a simple general construction.  As mentioned earlier,
this is a sharpness example for the result of Bollob\'as, Saito, and
Wormald~\cite{BSW} that every ${(2r+1)}$-regular $2t$-edge-connected or
$(2t+1)$-edge-connected multigraph has a $2k$-factor if and only if
${k\le \frac{t}{2t+1}}(2r+1)$.  It is simpler than their more general
construction.

\begin{theorem}\label{highk}
For $1\le t< r$ and $k>\frac{t}{2t+1}(2r+1)$, there is a $(2t+1)$-connected
$(2r+1)$-regular graph having no $2k$-factor.
\end{theorem}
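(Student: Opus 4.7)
The plan is to exhibit an explicit $(2r+1)$-regular simple graph $G$ and then to violate Theorem~\ref{lfact} at $S=\nul$ and a well-chosen $T$ to show there is no $2k$-factor, while independently verifying $(2t+1)$-connectivity. Take $T=\{v_1,\ldots,v_{2t+1}\}$ to be an independent set and let $Q_1,\ldots,Q_{2r+1}$ be $2r+1$ disjoint copies of a single base block $H$; in each copy $Q_j$ single out $2t+1$ ``attachment vertices'' $u^j_1,\ldots,u^j_{2t+1}$. Add only the edges $v_iu^j_i$ for all $i,j$, so that each $v_i$ receives exactly one edge from each block. Choose $H$ so that each attachment vertex has degree $2r$ in $H$ and every other vertex of $H$ has degree $2r+1$; then each $v_i$ has degree $2r+1$ and each $u^j_i$ has degree $2r+1$, making $G$ a $(2r+1)$-regular simple graph. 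A parity check forces $|V(H)|$ to be odd.

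For the factor criterion, apply Theorem~\ref{lfact} with $\ell=2k$, $S=\nul$, and this $T$. The components of $G-T$ are exactly the blocks $Q_j$, each with $\CC{V(Q_j),T}=2t+1$, which is odd; hence $q(\nul,T)=2r+1$. Since $T$ is independent, $d_G(T)=(2r+1)(2t+1)$. The required inequality $q(\nul,T)-d_G(T)\le 2k(|S|-|T|)=-2k(2t+1)$ becomes
\[
(2r+1)-(2r+1)(2t+1)\le-2k(2t+1),
\]
which simplifies to $k(2t+1)\le t(2r+1)$. The hypothesis $k>\frac{t}{2t+1}(2r+1)$ contradicts this, so $G$ has no $2k$-factor.

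For $(2t+1)$-connectivity, $T$ is itself a vertex cut of size $2t+1$, so $\kappa(G)\le 2t+1$. For the matching lower bound, take $H$ itself to be $(2t+1)$-connected and let $C\subseteq V(G)$ with $|C|\le 2t$. Since $|T|=2t+1$ some $v_i$ survives in $T\setminus C$; since $H$ is $(2t+1)$-connected each $Q_j-C$ is connected; and within any given block $Q_j$ the set of indices $i'$ for which both $v_{i'}\notin C$ and $u^j_{i'}\notin C$ excludes at most $|C|\le 2t$ of the $2t+1$ possibilities, leaving at least one. Because $|C|\le 2t<2r+1$, some block $Q_{j^\ast}$ contains no vertex of $C$, and routing through $Q_{j^\ast}$ together with the surviving attachments shows that $G-C$ is connected.

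The main obstacle is the base-block construction: producing a simple $(2t+1)$-connected $(2r+1)$-regular graph of odd order with a designated set of $2t+1$ attachment vertices each of which is to have one edge suppressed. This is a routine but non-trivial degree-sequence realization, and $|V(H)|$ must be taken large enough that the local edge modifications do not damage connectivity; for $r>t\ge 1$, starting from a sufficiently large $(2r+1)$-connected $(2r+1)$-regular graph and adjusting edges near $2t+1$ chosen vertices suffices. Once $H$ is fixed, the rest is the clean Belck--Tutte calculation above, which saturates exactly at the threshold $k=\frac{t}{2t+1}(2r+1)$ and fails on its other side.
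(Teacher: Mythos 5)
Your construction is essentially the paper's: an independent set $T$ of size $2t+1$ joined by matchings to $2r+1$ copies of a base block in which $2t+1$ designated vertices have degree $2r$ and the rest have degree $2r+1$. Your verification that no $2k$-factor exists, via Theorem~\ref{lfact} with $S=\nul$ (each block is a \odd\ component, $q=2r+1$, $d_G(T)=(2r+1)(2t+1)$, forcing $k(2t+1)\le t(2r+1)$), is correct and is just the Tutte-criterion form of the paper's more elementary argument, which counts directly that an even factor crosses each of the $2r+1$ cuts of size $2t+1$ an even number of times, hence in at most $2t$ edges, so $2k(2t+1)\le 2t(2r+1)$. Your connectivity argument is also sound, granted that the base block is $(2t+1)$-connected.

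The one genuine gap is that you never actually produce the base block $H$: the theorem is purely an existence statement, and you defer exactly the existential content to ``a routine but non-trivial degree-sequence realization'' followed by unspecified ``edge adjustments'' that must simultaneously achieve the prescribed degrees ($2t+1$ vertices of degree $2r$, the rest of degree $2r+1$), odd order, simplicity, and $(2t+1)$-connectivity. None of these is hard, but as written the existence of $H$ is asserted rather than proved, and the claim that local modifications ``do not damage connectivity'' needs an argument. The paper closes this gap with a one-line explicit choice: take $H$ to be the complement of $C_{2t+1}+(r-t+1)K_2$ inside $K_{2r+3}$, i.e.\ $K_{2r+3}$ minus a $(2t+1)$-cycle and $r-t+1$ disjoint edges off the cycle. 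This has $2r+3$ vertices (odd), the $2t+1$ cycle vertices have degree $2r$ and the others degree $2r+1$, and it is far more than $(2t+1)$-connected since only a graph of maximum degree $2$ was deleted from a complete graph. Substituting this (or any comparably explicit block) makes your argument complete.
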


\begin{proof}
Let $H_{r,t}$ be the complement of $C_{2t+1}+(r-t+1)K_2$.  That is, $H_{r,t}$
is obtained from the complete graph $K_{2r+3}$ by deleting the edges of a
$(2t+1)$-cycle and $r-t+1$ other pairwise disjoint edges not incident to the
cycle.  Note that in $H_{r,t}$ the vertices of the deleted cycle have degree
$2r$, while the remaining vertices have degree $2r+1$.  Let $G$ be the graph
formed from the disjoint union of $2r+1$ copies of $H_{r,t}$ by adding a set
$T$ of $2t+1$ vertices and $2r+1$ matchings joining $T$ to the vertices
of the deleted cycle in each copy of $H_{r,t}$ (see Figure~\ref{fighighk}).

Deleting $2t$ vertices cannot separate any copy of $H_{r,t}$ from $T$, and
any two vertices of $T$ are connected by $2r+1$ disjoint paths through the
copies of $H_{r,t}$, so $G$ is $(2t+1)$-connected.

Suppose that $G$ has a $2k$-factor $F$.  Every edge cut in an even factor
is crossed by an even number of edges, since the factor decomposes into cycles.
Hence $F$ has at most $2t$ edges joining $T$ to each copy of $H_{r,t}$.
On the other hand, since $T$ is independent, $F$ must have $2k|T|$ edges
leaving $T$.  Thus $2k(2t+1)\le 2t(2r+1)$.
%
%Letting $S=\emptyset$ and $R=V(G)-T$, the partition $R,S,T$ satisfy (\ref{lfac}), proving $G$ has no $2k$-factor.
\end{proof}

%We provide an explicit extremal example in Figure~\ref{fig} with $(r,k)=(3,2)$.

%\begin{figure}[h]
%\gpic{
%.PS 4
%B0:elli(p10+h5+v5,1.8,4.5); spot9(p00,p01,p10,p11,p20,p21,p30,p31,B0.e+h5);
%edge(A,B); dmatc3(C,D,E,F,G,H); degr6(I,C,D,E,F,G,H);
%B2:elli(p12+h5+v5,1.8,4.5); spot9(p02,p03,p12,p13,p22,p23,p32,p33,B2.e+h5);
%edge(A,B); dmatc3(C,D,E,F,G,H); degr6(I,C,D,E,F,G,H);
%B4:elli(p14+h5+v5,1.8,4.5); spot9(p04,p05,p14,p15,p24,p25,p34,p35,B4.e+h5);
%edge(A,B); dmatc3(C,D,E,F,G,H); degr6(I,C,D,E,F,G,H);
%B6:elli(p16+h5+v5,1.8,4.5); spot9(p06,p07,p16,p17,p26,p27,p36,p37,B6.e+h5);
%edge(A,B); dmatc3(C,D,E,F,G,H); degr6(I,C,D,E,F,G,H);
%V:spot(p63+v5); degr4(V,B0.e+h5,B2.e+h5,B4.e+h5,B6.e+h5);
%O:p82+v5; spot9(op00,op01-h3,op02,op10,op20,op11,op21,op12,op22);
%dcycl3(A,B,C); dmatc3(D,E,F,G,H,I); degr3(V,A,B,C); W:circ(F,1.6);
%call3(B0.s-v5,W.s-v5,V,"$4(K_8-E(3K_2))$","$K_9-E(C_3+3K_2)$","$v$",c,c,s);
%.PE
%}

%\vspace{-2pc}
%\caption{A $7$-regular graph with four cut-edges and no $4$-factor.\label{fig}}
%\end{figure}

\begin{figure}[h]
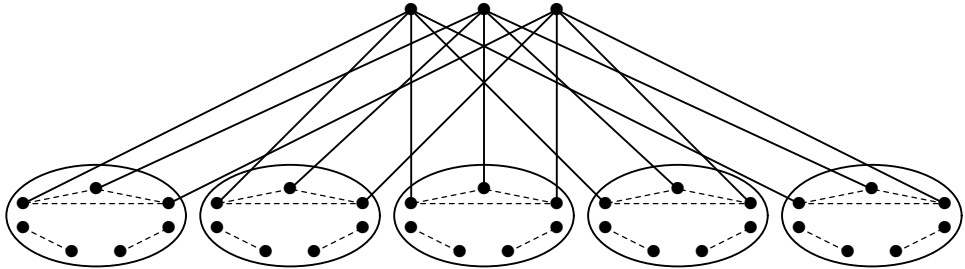

\gpic{
\expandafter\ifx\csname graph\endcsname\relax \csname newbox\endcsname\graph\fi
\expandafter\ifx\csname graphtemp\endcsname\relax \csname newdimen\endcsname\graphtemp\fi
\setbox\graph=\vtop{\vskip 0pt\hbox{%
    \graphtemp=.5ex\advance\graphtemp by 0.032in
    \rlap{\kern 2.119in\lower\graphtemp\hbox to 0pt{\hss $\bu$\hss}}%
    \graphtemp=.5ex\advance\graphtemp by 0.032in
    \rlap{\kern 2.500in\lower\graphtemp\hbox to 0pt{\hss $\bu$\hss}}%
    \graphtemp=.5ex\advance\graphtemp by 0.032in
    \rlap{\kern 2.881in\lower\graphtemp\hbox to 0pt{\hss $\bu$\hss}}%
    \graphtemp=.5ex\advance\graphtemp by 1.174in
    \rlap{\kern 0.089in\lower\graphtemp\hbox to 0pt{\hss $\bu$\hss}}%
    \graphtemp=.5ex\advance\graphtemp by 1.301in
    \rlap{\kern 0.343in\lower\graphtemp\hbox to 0pt{\hss $\bu$\hss}}%
    \graphtemp=.5ex\advance\graphtemp by 1.301in
    \rlap{\kern 0.596in\lower\graphtemp\hbox to 0pt{\hss $\bu$\hss}}%
    \graphtemp=.5ex\advance\graphtemp by 1.174in
    \rlap{\kern 0.850in\lower\graphtemp\hbox to 0pt{\hss $\bu$\hss}}%
    \graphtemp=.5ex\advance\graphtemp by 1.047in
    \rlap{\kern 0.850in\lower\graphtemp\hbox to 0pt{\hss $\bu$\hss}}%
    \graphtemp=.5ex\advance\graphtemp by 0.971in
    \rlap{\kern 0.470in\lower\graphtemp\hbox to 0pt{\hss $\bu$\hss}}%
    \graphtemp=.5ex\advance\graphtemp by 1.047in
    \rlap{\kern 0.089in\lower\graphtemp\hbox to 0pt{\hss $\bu$\hss}}%
    \special{pn 11}%
    \special{pa 2119 32}%
    \special{pa 89 1047}%
    \special{fp}%
    \special{pa 2500 32}%
    \special{pa 470 971}%
    \special{fp}%
    \special{pa 2881 32}%
    \special{pa 850 1047}%
    \special{fp}%
    \special{pn 8}%
    \special{pa 89 1174}%
    \special{pa 343 1301}%
    \special{da 0.025}%
    \special{pa 596 1301}%
    \special{pa 850 1174}%
    \special{da 0.025}%
    \special{pa 850 1047}%
    \special{pa 470 971}%
    \special{pa 89 1047}%
    \special{pa 850 1047}%
    \special{da 0.025}%
    \special{pn 11}%
    \special{ar 470 1110 470 266 0 6.28319}%
    \graphtemp=.5ex\advance\graphtemp by 1.174in
    \rlap{\kern 1.104in\lower\graphtemp\hbox to 0pt{\hss $\bu$\hss}}%
    \graphtemp=.5ex\advance\graphtemp by 1.301in
    \rlap{\kern 1.358in\lower\graphtemp\hbox to 0pt{\hss $\bu$\hss}}%
    \graphtemp=.5ex\advance\graphtemp by 1.301in
    \rlap{\kern 1.612in\lower\graphtemp\hbox to 0pt{\hss $\bu$\hss}}%
    \graphtemp=.5ex\advance\graphtemp by 1.174in
    \rlap{\kern 1.865in\lower\graphtemp\hbox to 0pt{\hss $\bu$\hss}}%
    \graphtemp=.5ex\advance\graphtemp by 1.047in
    \rlap{\kern 1.865in\lower\graphtemp\hbox to 0pt{\hss $\bu$\hss}}%
    \graphtemp=.5ex\advance\graphtemp by 0.971in
    \rlap{\kern 1.485in\lower\graphtemp\hbox to 0pt{\hss $\bu$\hss}}%
    \graphtemp=.5ex\advance\graphtemp by 1.047in
    \rlap{\kern 1.104in\lower\graphtemp\hbox to 0pt{\hss $\bu$\hss}}%
    \special{pa 2119 32}%
    \special{pa 1104 1047}%
    \special{fp}%
    \special{pa 2500 32}%
    \special{pa 1485 971}%
    \special{fp}%
    \special{pa 2881 32}%
    \special{pa 1865 1047}%
    \special{fp}%
    \special{pn 8}%
    \special{pa 1104 1174}%
    \special{pa 1358 1301}%
    \special{da 0.025}%
    \special{pa 1612 1301}%
    \special{pa 1865 1174}%
    \special{da 0.025}%
    \special{pa 1865 1047}%
    \special{pa 1485 971}%
    \special{pa 1104 1047}%
    \special{pa 1865 1047}%
    \special{da 0.025}%
    \special{pn 11}%
    \special{ar 1485 1110 470 266 0 6.28319}%
    \graphtemp=.5ex\advance\graphtemp by 1.174in
    \rlap{\kern 2.119in\lower\graphtemp\hbox to 0pt{\hss $\bu$\hss}}%
    \graphtemp=.5ex\advance\graphtemp by 1.301in
    \rlap{\kern 2.373in\lower\graphtemp\hbox to 0pt{\hss $\bu$\hss}}%
    \graphtemp=.5ex\advance\graphtemp by 1.301in
    \rlap{\kern 2.627in\lower\graphtemp\hbox to 0pt{\hss $\bu$\hss}}%
    \graphtemp=.5ex\advance\graphtemp by 1.174in
    \rlap{\kern 2.881in\lower\graphtemp\hbox to 0pt{\hss $\bu$\hss}}%
    \graphtemp=.5ex\advance\graphtemp by 1.047in
    \rlap{\kern 2.881in\lower\graphtemp\hbox to 0pt{\hss $\bu$\hss}}%
    \graphtemp=.5ex\advance\graphtemp by 0.971in
    \rlap{\kern 2.500in\lower\graphtemp\hbox to 0pt{\hss $\bu$\hss}}%
    \graphtemp=.5ex\advance\graphtemp by 1.047in
    \rlap{\kern 2.119in\lower\graphtemp\hbox to 0pt{\hss $\bu$\hss}}%
    \special{pa 2119 32}%
    \special{pa 2119 1047}%
    \special{fp}%
    \special{pa 2500 32}%
    \special{pa 2500 971}%
    \special{fp}%
    \special{pa 2881 32}%
    \special{pa 2881 1047}%
    \special{fp}%
    \special{pn 8}%
    \special{pa 2119 1174}%
    \special{pa 2373 1301}%
    \special{da 0.025}%
    \special{pa 2627 1301}%
    \special{pa 2881 1174}%
    \special{da 0.025}%
    \special{pa 2881 1047}%
    \special{pa 2500 971}%
    \special{pa 2119 1047}%
    \special{pa 2881 1047}%
    \special{da 0.025}%
    \special{pn 11}%
    \special{ar 2500 1110 470 266 0 6.28319}%
    \graphtemp=.5ex\advance\graphtemp by 1.174in
    \rlap{\kern 3.135in\lower\graphtemp\hbox to 0pt{\hss $\bu$\hss}}%
    \graphtemp=.5ex\advance\graphtemp by 1.301in
    \rlap{\kern 3.388in\lower\graphtemp\hbox to 0pt{\hss $\bu$\hss}}%
    \graphtemp=.5ex\advance\graphtemp by 1.301in
    \rlap{\kern 3.642in\lower\graphtemp\hbox to 0pt{\hss $\bu$\hss}}%
    \graphtemp=.5ex\advance\graphtemp by 1.174in
    \rlap{\kern 3.896in\lower\graphtemp\hbox to 0pt{\hss $\bu$\hss}}%
    \graphtemp=.5ex\advance\graphtemp by 1.047in
    \rlap{\kern 3.896in\lower\graphtemp\hbox to 0pt{\hss $\bu$\hss}}%
    \graphtemp=.5ex\advance\graphtemp by 0.971in
    \rlap{\kern 3.515in\lower\graphtemp\hbox to 0pt{\hss $\bu$\hss}}%
    \graphtemp=.5ex\advance\graphtemp by 1.047in
    \rlap{\kern 3.135in\lower\graphtemp\hbox to 0pt{\hss $\bu$\hss}}%
    \special{pa 2119 32}%
    \special{pa 3135 1047}%
    \special{fp}%
    \special{pa 2500 32}%
    \special{pa 3515 971}%
    \special{fp}%
    \special{pa 2881 32}%
    \special{pa 3896 1047}%
    \special{fp}%
    \special{pn 8}%
    \special{pa 3135 1174}%
    \special{pa 3388 1301}%
    \special{da 0.025}%
    \special{pa 3642 1301}%
    \special{pa 3896 1174}%
    \special{da 0.025}%
    \special{pa 3896 1047}%
    \special{pa 3515 971}%
    \special{pa 3135 1047}%
    \special{pa 3896 1047}%
    \special{da 0.025}%
    \special{pn 11}%
    \special{ar 3515 1110 470 266 0 6.28319}%
    \graphtemp=.5ex\advance\graphtemp by 1.174in
    \rlap{\kern 4.150in\lower\graphtemp\hbox to 0pt{\hss $\bu$\hss}}%
    \graphtemp=.5ex\advance\graphtemp by 1.301in
    \rlap{\kern 4.404in\lower\graphtemp\hbox to 0pt{\hss $\bu$\hss}}%
    \graphtemp=.5ex\advance\graphtemp by 1.301in
    \rlap{\kern 4.657in\lower\graphtemp\hbox to 0pt{\hss $\bu$\hss}}%
    \graphtemp=.5ex\advance\graphtemp by 1.174in
    \rlap{\kern 4.911in\lower\graphtemp\hbox to 0pt{\hss $\bu$\hss}}%
    \graphtemp=.5ex\advance\graphtemp by 1.047in
    \rlap{\kern 4.911in\lower\graphtemp\hbox to 0pt{\hss $\bu$\hss}}%
    \graphtemp=.5ex\advance\graphtemp by 0.971in
    \rlap{\kern 4.530in\lower\graphtemp\hbox to 0pt{\hss $\bu$\hss}}%
    \graphtemp=.5ex\advance\graphtemp by 1.047in
    \rlap{\kern 4.150in\lower\graphtemp\hbox to 0pt{\hss $\bu$\hss}}%
    \special{pa 2119 32}%
    \special{pa 4150 1047}%
    \special{fp}%
    \special{pa 2500 32}%
    \special{pa 4530 971}%
    \special{fp}%
    \special{pa 2881 32}%
    \special{pa 4911 1047}%
    \special{fp}%
    \special{pn 8}%
    \special{pa 4150 1174}%
    \special{pa 4404 1301}%
    \special{da 0.025}%
    \special{pa 4657 1301}%
    \special{pa 4911 1174}%
    \special{da 0.025}%
    \special{pa 4911 1047}%
    \special{pa 4530 971}%
    \special{pa 4150 1047}%
    \special{pa 4911 1047}%
    \special{da 0.025}%
    \special{pn 11}%
    \special{ar 4530 1110 470 266 0 6.28319}%
    \hbox{\vrule depth1.377in width0pt height 0pt}%
    \kern 5.000in
  }%
}%
}

\vspace{-1pc}
\caption{$(2r+1)$-regular, $(2t+1)$-connected, no $2k$-factor
($(r,t,k)=(2,1,2)$ shown).\label{fighighk}}
\end{figure}

\end{document}